 \newtheorem{theorem}{Theorem}
 \newtheorem{lemma}{Lemma}
\theoremstyle{definition}
\newtheorem{definition}{Definition}%[section]
\theoremstyle{remark}
\newtheorem{remark}{Remark}
\renewcommand{\le}{\leqslant}
\renewcommand{\ge}{\geqslant}
\DeclareMathOperator{\diag}{diag}
\newcommand{\ci}{
\begin{picture}(6,6)
\put(3,3){\circle*{3}}
\end{picture}}
\begin{document}

\title{Systems of subspaces of a unitary space}

\author[im]{Vitalij M. Bondarenko}
\ead{vit-bond@imath.kiev.ua}

\author[br]{Vyacheslav Futorny}
\ead{futorny@ime.usp.br}

\author[klim]{Tatiana Klimchuk}
\ead{klimchuk.tanya@gmail.com}

\author[im]{Vladimir V. Sergeichuk}
\ead{sergeich@imath.kiev.ua}

\author[br]{Kostyantyn Yusenko}
\ead{kay.math@gmail.com}

\address[im]{Institute of Mathematics, Tereschenkivska 3, Kiev, Ukraine.}

\address[br]{Department of Mathematics, University of S\~ao Paulo, Brazil.}

\address[klim]
{Faculty of Mechanics and Mathematics,
Kiev National Taras Shevchenko
University, Kiev, Ukraine.}

\begin{abstract}
For a finite poset ${\cal P}=\{p_1,\dots,p_t\}$, we study systems $(U_1,\dots,U_t)_U$ of subspaces $U_1,\dots,U_t$ of a unitary space $U$ such that $U_i\subseteq U_j$ if $p_i\prec p_j$.
Two systems
$(U_1,\dots,U_t)_U$ and $(V_1,\dots,V_t)_V$ are said to be isometric if there exists an isometry $\varphi:U\to V$ such that $\varphi(U_i)=V_i$. We classify such systems up to isometry if $\cal P$ is a semichain. We prove that the problem of their classification is unitarily wild if $\cal P$ is not a semichain. A classification problem is called unitarily wild if it contains the problem of classifying linear operators on a unitary space, which is hopeless in a certain sense.
\end{abstract}

\begin{keyword}
Representations of posets\sep Tame and wild problems\sep Subspaces of unitary spaces
\MSC  15A63\sep 15A21.
\end{keyword}

\maketitle

\section{Introduction}

For a given poset, we consider its representations by systems of subspaces of a unitary space ordered by inclusion. We classify such systems for all posets for which an explicit classification is possible.

In this paper, we denote by $\mathcal P$ a finite set  with a partial order $\preccurlyeq$ whose elements $p_1,\dots,p_t$ are enumerated such that $p_i\prec p_j$ implies $i<j$.

\begin{definition}\label{def1}
A \emph{$\cal P$-system of subspaces of a unitary space} (\emph{$\cal P$-system} for short) is a system $(U_1,\dots,U_t)_U$ in which $U$ is  a unitary space and $U_1,\dots,U_t$ are  its subspaces such that $U_i\subseteq U_j$ if $p_i\prec p_j$. (Therefore, a $\cal P$-system is defined by
a homomorphism from $\cal P$ to the poset of all subspaces of a unitary space $U$.) Two $\cal P$-systems $(U_1,\dots,U_t)_U$ and $(V_1,\dots,V_t)_V$ are \emph{isometric} if there exists an isometry $\varphi: U\to V$ such that $\varphi (U_1)=V_1,\dots, \varphi (U_t)=V_t$. (Recall that a bijection $\varphi: U\to V$ is an \emph{isometry} if $(x,y)=(\varphi x,\varphi y)$ for all $x,y\in U$.) The problem is to classify $\cal P$-systems up to isometry.
\end{definition}

In particular, if all elements of $\cal P$ are incomparable, then we get the problem of classifying $t$-tuples of subspaces of a unitary space.
\begin{itemize}
  \item
\emph{Pairs of subspaces} of a unitary space and pairs of closed subspaces of a Hilbert space were studied and classified by many authors; see \cite{dixm,hal,gal}, the bibliography in \cite{gal}, and \cite[Section I.5]{ste}. Halmos \cite{hal} writes: ``Specialization   to  the  finite-dimensional   case
makes  neither  the  conclusions  more  obvious  nor  the  proofs  substantially   simpler''. Pairs of subspaces of a space with an indefinite scalar product over a field $\mathbb F$ of characteristic not $2$ were classified in \cite{ser_pairs}  up to classification of quadratic and Hermitian forms over finite extensions of $\mathbb F$.

  \item
The problem of classifying \emph{triples of subspaces} of a unitary space is unitarily wild; see \cite{krug_80,Serg_un}. A classification problem is called \emph{unitarily wild} if it contains the problem of classifying linear operators on unitary spaces. The latter problem contains the problem of classifying \emph{any} system of linear mappings on unitary spaces; see \cite{krug_80} and \cite[Section 2.3]{Serg_un}. Thus, all unitarily wild problems of classifying systems of mappings on unitary spaces have the same complexity and a solution of one would imply a solution of each other. By this reason, we cannot expect to get an observable solution to any unitarily wild problem.
\end{itemize}

\begin{definition}\label{def2}
A poset
${\cal P}=\{p_1,\dots,p_t\}$ is a \emph{chain} if  $p_1 \prec p_2 \prec \dots \prec p_t$.
A poset $\cal P$ is a \emph{semichain} if it has the form
\begin{equation}\label{kjy}
\mathcal P_1 \prec \mathcal P_2 \prec \dots \prec \mathcal P_s
\end{equation}
in which every $\mathcal P_i$ consists of one or two incomparable elements and $\mathcal P_i\prec\mathcal P_{i+1}$ means that $a\prec b$ for all $a\in\mathcal P_{i}$ and $b\in\mathcal P_{i+1}$.
\end{definition}
For example, a poset with Hasse diagram
\[
\xymatrix@=2pt{
&&&*{\bullet}\ar@{<-}[llld]\ar[rrrd]
 &&&&&&&&&&&&*{\bullet}\ar@{<-}[llld]\ar[rrr]
 \ar[rrrdd]
 &&&*{\bullet}\ar[rrr]\ar[rrrdd]&&&
 *{\bullet}\ar[rrrd]
&&&&&&&&&*{\bullet}\ar@{<-}[llld]
\\
*{\bullet}&&&&&&*{\bullet}\ar[rrr]&&&
*{\bullet}
\ar[rrr]&&&*{\bullet}&&&&&& &&&&&&
*{\bullet}\ar[rrr]&&&*{\bullet}
\\&&&*{\bullet}\ar@{<-}[lllu]\ar[rrru]
&&&&&&&&&&&&*{\bullet}\ar@{<-}[lllu]\ar[rrr]
\ar[rrruu]
&&&*{\bullet}\ar[rrr]\ar[rrruu]&&&*{\bullet}\ar[rrru]
&&&&&&&&&*{\bullet}\ar@{<-}[lllu]
}
\]
($a\longrightarrow b$ denotes $a\prec b$) is a semichain. Semichains are often appear in representation theory; see \cite{bon}.

We prove that \emph{the problem of classifying $\cal P$-systems up to isometry is unitarily wild if and only if $\cal P$ is not a semichain and classify $\cal P$-systems up to isometry for each semichain $\cal P$.}

Note that the problem of classifying systems of subspaces in \emph{vector spaces} (without scalar product) is much more meaningful; see Section \ref{kudr}. In particular, the problem of classifying $t$-tuples of subspaces of a vector space is trivial for $t=2$, it is not difficult for $t=3$, it was solved by Gelfand and Ponomarev \cite{gel_pon_chetverki} (see also \cite{bren,med-zav}) for $t=4$, and it is hopeless if $t\ge 5$ (see (b) in Section \ref{kudr}).

The paper is organized as follows. In Section \ref{s2} we formulate two main theorems. In Section \ref{sss3} we reformulate them in the matrix form.  In Sections \ref{kpkd} and \ref{sskw} we prove the main theorems. In Section \ref{kudr} we compare them with classical results about systems of subspaces of a vector space. In Section \ref{appen} we explain the origin of the integral quadratic form \eqref{khtr}.

\section{Two main theorems}\label{s2}

The \emph{orthogonal direct sum} of two $\cal P$-systems $\mathcal U=(U_1,\dots,U_t)_U$ and $\mathcal V=(V_1,\dots,V_t)_V$ is the $\cal P$-system \begin{equation}\label{ouo}
\mathcal U\perp \mathcal V:=(U_1+ V_1,\dots,U_t+ V_t)_{U\perp V},
\end{equation}
in which $U\perp V$ is the orthogonal direct sum of unitary spaces $U$ and $V$.

A $\cal P$-system $\mathcal U=(U_1,\dots,U_t)_U$ is \emph{indecomposable} if $U\ne 0$ and $\mathcal U$ is not isometric to an orthogonal direct sum of $\cal P$-systems of subspaces of unitary spaces of smaller dimensions.

\begin{definition}\label{defw}
\begin{itemize}
  \item
A poset $\cal P$ is \emph{unitarily representation-finite} if it has only a finite number of nonisometric $\cal P$-systems that are unitarily indecomposable.

    \item
A poset $\cal P$ is \emph{unitarily wild} if the problem of classifying $\cal P$-systems up to isometry contains the problem of classifying operators on unitary spaces. The posets that are not unitarily wild are called \emph{unitarily tame} (in analogy with
the partition of animals into
wild and tame ones).
\end{itemize}
\end{definition}
Clearly, each unitarily representation-finite poset is unitarily tame.

For a poset ${\cal P}=\{p_1,\dots,p_t\}$, we define the integral quadratic form
\begin{equation}\label{khtr}
\begin{split}
u_{\mathcal P}(x_0,&x_1,\dots,x_t)\\&:=x_0^2+2\Big(x_1^2+\dots+x_t^2 + \sum_{p_i\prec p_j} x_ix_j - x_0(x_1+\dots+x_t)\Big)
\end{split}
\end{equation}
in which the sum is taken over all pairs of elements $p_i,p_j\in\cal P$ satisfying $p_i\prec p_j$. This form can be called the \emph{Tits form for unitary representations of the poset ${\cal P}$} since in plays the same role and is constructed in the same way (see Section \ref{appen}) as the Tits form \eqref{kde1} of a poset and the Tits form of a quiver (see \cite[Section 7.1]{gab_roi} or \cite[Section 2.4]{haz}).

\begin{theorem}[proved in Section \ref{sskw}]\label{ttt1}
\begin{itemize}
  \item[\rm(a)] The following three conditions are equivalent for a finite poset $\mathcal P$:
	\begin{itemize}
	    \item[{\rm(i)}] $\mathcal P$ is unitarily representation-finite,
		
\item[{\rm(ii)}] $\mathcal P$ is a chain,	

\item[{\rm(iii)}] the form $u_{\mathcal P}$ is positive definite.	

	\end{itemize}	

  \item[\rm(b)] The following three conditions are equivalent for a finite poset $\mathcal P$:
	\begin{itemize}
	    \item[{\rm(i)}] $\mathcal P$ is unitarily tame,
		
\item[{\rm(ii)}] $\mathcal P$ is a semichain,	

\item[{\rm(iii)}] the form $u_{\mathcal P}$ is  nonnegative definite.
	
	\end{itemize}	
\end{itemize}
\end{theorem}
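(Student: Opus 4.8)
The plan is to establish the two cyclic chains of equivalences by proving enough implications to close each loop, leaning on the combinatorial structure of semichains and chains together with the arithmetic of the form $u_{\mathcal P}$. For part (a), I would first show (ii)$\Rightarrow$(i): if $\mathcal P$ is a chain, then a $\mathcal P$-system is a flag $U_1\subseteq U_2\subseteq\dots\subseteq U_t$ in a unitary space, and a direct orthogonal refinement argument shows every such flag decomposes into one-dimensional indecomposables indexed by ``which subspaces contain the line,'' so there are only $t+1$ nonisometric indecomposables. The implication (i)$\Rightarrow$(ii) I would prove contrapositively: if $\mathcal P$ is not a chain, it contains two incomparable elements, hence (after noting that a suitable subposet controls the problem) it produces infinitely many nonisometric indecomposables, the simplest source being the classification of pairs of subspaces of a unitary space, whose indecomposables form a continuous family parametrized by an angle. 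The equivalence with (iii) is the purely arithmetic statement that $u_{\mathcal P}$ is positive definite iff $\mathcal P$ is a chain; I would verify this by computing the Gram matrix of $u_{\mathcal P}$ and checking, via a Sylvester/principal-minor analysis, that incomparable pairs force a degenerate or indefinite direction while a chain yields a positive definite matrix.

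For part (b), the heart is the dichotomy announced in the introduction: $\mathcal P$-systems are unitarily tame exactly when $\mathcal P$ is a semichain. The implication (ii)$\Rightarrow$(i) I would obtain from the companion main theorem (the explicit classification of $\mathcal P$-systems for a semichain $\mathcal P$, stated in Section~\ref{s2}): an explicit complete list of indecomposables, even an infinite one, shows the problem is not wild, so a semichain is unitarily tame. The converse direction (i)$\Rightarrow$(ii), equivalently the contrapositive that a non-semichain is unitarily wild, is the crux, and I would prove it by a reduction: if $\mathcal P$ is not a semichain, then its Hasse diagram must contain one of a short list of minimal forbidden configurations (posets that are ``just beyond'' semichains), and for each such configuration I would exhibit an explicit embedding of the problem of classifying a single linear operator on a unitary space into the classification of $\mathcal P$-systems. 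Concretely, I would assign subspaces to the forbidden sub-poset built from the graph of an arbitrary operator $A\colon H\to H$ inside $H\oplus H$, arranged so that an isometry of the constructed $\mathcal P$-systems forces a unitary intertwiner of the operators; this is the standard mechanism underlying the known wildness of triples of subspaces cited in the introduction.

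The equivalence of (i)/(ii) with (iii) in part (b) is again arithmetic: $u_{\mathcal P}$ is nonnegative definite iff $\mathcal P$ is a semichain. Here I would diagonalize $u_{\mathcal P}$ and show that the semichain structure \eqref{kjy} makes $u_{\mathcal P}$ a sum of squares with a single null direction (the radical), corresponding to the $x_0$-versus-$\sum x_i$ balance, whereas any configuration violating the semichain condition — three mutually incomparable elements, or an incomparable pair not uniformly above/below the rest — contributes a negative eigenvalue. The cleanest route is probably to reuse the forbidden-configuration list from the (i)$\Leftrightarrow$(ii) argument and verify, for each minimal forbidden poset, that its form $u_{\mathcal P}$ restricted to the relevant coordinates takes a negative value at an explicit integer vector, while simultaneously checking nonnegativity directly on the general semichain.

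The main obstacle I anticipate is the wildness reduction (i)$\Rightarrow$(ii) in part~(b): one must correctly enumerate the minimal non-semichains and, for each, construct a subspace configuration that genuinely encodes an arbitrary unitary-similarity class of operators, making sure the isometry constraint on the full unitary space $U$ (not merely a linear isomorphism) translates exactly into unitary conjugacy of the encoded operators. Getting the orthogonality bookkeeping right — so that no extra relations are imposed and the reduction is faithful — is the delicate part; the positive-definiteness and tameness halves are comparatively routine once the classification theorem for semichains and the Gram-matrix computation are in hand.
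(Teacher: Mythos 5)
Most of your plan tracks the paper's actual proof: the tame/finite directions do come from the classification theorem (Theorem \ref{ttt2}), representation-infiniteness of a non-chain comes from a continuous family attached to an incomparable pair (the paper's $\mathcal G_{k,\sigma}$, i.e.\ your ``angle'' family), and the equivalences with (iii) are settled exactly as you suggest, by a sum-of-squares expression of $u_{\mathcal P}$ for chains and semichains and by explicit integer vectors ($x_0=2$, $x_i=x_j=1$, resp.\ $x_0=3$, $x_i=x_j=x_k=1$) for the converses. The genuine gap is in the one step you yourself call the crux: wildness of every non-semichain. The mechanism you propose --- realize an operator $X$ on $H$ by its graph inside $H\perp H$ against coordinate-type subspaces, ``as for triples of subspaces'' --- fails in the unitary setting. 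An isometry of $H\perp H$ that preserves $H\oplus 0$ and $0\oplus H$ splits as $U\oplus V$ with $U,V$ unitary and \emph{independent}, and it carries the graph of $X$ to the graph of $VXU^{-1}$; so this configuration encodes $X$ only up to two-sided unitary equivalence $X\mapsto VXU^{*}$, which is completely classified by singular values and is therefore tame --- no unitary \emph{similarity} is forced, and the reduction is not faithful. (This is also why wildness of unitary triples in the cited literature is not proved by the naive graph construction.) The paper's proof in Section \ref{sskw} supplies precisely the missing rigidifying gadget: the block matrix $A(M)$ contains an extra strip $\Sigma=I_n\oplus 2I_n\oplus 3I_n\oplus 4I_n$, whose distinct singular values force the admissible unitary row transformation to act on both halves of the space by one and the same block-diagonal unitary $U_1\oplus U_2\oplus U_3\oplus U_4$; then taking $M$ with block rows $[I\;\;0]$, $[0\;\;I]$, $[I\;\;I]$, $[I\;\;X]$ forces $S_3=U_1^{-1}\oplus U_2^{-1}$, $U_1=U_2=U_4$, and finally $U_4XU_4^{-1}=Y$. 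Without some such gadget your ``orthogonality bookkeeping'' cannot be made to work; it is not a technicality but the entire content of this implication.

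Two smaller points. First, your list of minimal forbidden configurations must include the poset in which $p_i\prec p_k$ and $p_j$ is incomparable to both (the left diagram in \eqref{lce}), not only three pairwise incomparable elements; for that configuration you cannot quote wildness of triples of subspaces, since two of the three subspaces are nested, whereas the paper's single construction handles both cases of \eqref{lce} uniformly (whether or not the block $S_{13}$ is constrained to vanish is immaterial to the argument). Second, a harmless inaccuracy: for a semichain with $s$ two-element blocks the form $u_{\mathcal P}$ is a sum of $s+1$ squares in $2s+1$ variables, so its radical has dimension $s$, not one; this does not affect nonnegative definiteness.
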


Recall that an integral quadratic form $q:\mathbb Z^n\rightarrow \mathbb Z$ is called
\begin{itemize}
  \item \textit{positive definite} if $q(z)> 0$
  \item \textit{nonnegative definite} if $q(z)\ge 0$
\end{itemize}
for all nonzero $z=(z_1,\dots,z_n)\in\mathbb Z^n$. It
is called
\begin{itemize}
  \item \textit{weakly positive definite} if $q(z)> 0$
  \item \textit{weakly nonnegative definite} if $q(z)\ge 0$
\end{itemize}
for all nonzero $ z=(z_1,\dots,z_n)\in\mathbb Z^n$ with nonnegative $z_1,\dots,z_n$.

\begin{remark}\label{re1}
We prove in Theorem \ref{th4}(ii) that for each finite poset $\mathcal P$ the form $u_{\mathcal P}$
is positive definite if and only if it is weakly positive definite; $u_{\mathcal P}$ is nonnegative definite if and only if it is weakly nonnegative definite. The same holds for the Tits form of a quiver, but does not hold for the Tits form of the poset \eqref{lche}, and so positive and nonnegative definiteness cannot be used in the nonunitary analogue of Theorem \ref{ttt1} (see the statements (a) and (b) in Section \ref{kudr}).
\end{remark}

Define the following indecomposable $\mathcal P$-systems for a semichain $\mathcal P=\{p_1,\dots,p_t\}$:
\begin{description}
  \item[${\cal F}_k$]$\!\!:=\bigl(\,\underbrace{0,\dots,0}_{k-1}\,,
\mathbb C,\dots,\mathbb C\bigr)_{\mathbb C}$ for each $k=1,\dots,t$,

  \item[${\cal F}$]$\!\!:=(0,\dots,0)_{\mathbb C}$,

  \item[${\cal G}_{k,\sigma }$]$\!\!:=\bigl(\,\underbrace{0,\dots,0}_{k-1}\,,\mathbb C(1,0),\mathbb C(\sigma ,1),\mathbb C\perp \mathbb C,\dots,\mathbb C\perp \mathbb C\bigr)_{\mathbb C\perp \mathbb C}$
for each $p_k\nprec p_{k+1}$ and for each positive real $\sigma  $ (here $(1,0)$ and $(\sigma ,1)$ are the elements of $\mathbb C\perp \mathbb C$),

  \item[${\cal G}_{k}$]$\!\!:=\bigl(\,\underbrace{0,\dots,0}_{k-1}\,,
\mathbb C,0,\mathbb C,\dots,\mathbb C\bigr)_{\mathbb C}$
for each $p_k\nprec p_{k+1}$.
\end{description}

For each unitarily tame poset $\mathcal P$, $\mathcal P$-systems are classified up to isometry in the following theorem.

\begin{theorem}[proved in Section \ref{kpkd}]\label{ttt2}
\begin{itemize}
  \item[\rm(a)] If $\mathcal P$ is a chain, then each $\mathcal P$-system of subspaces of a unitary space is isometric to an orthogonal direct sum, uniquely determined up to permutation of summands, of $\mathcal P$-systems of the form $\mathcal F_1,\dots,\mathcal F_{t},\mathcal F$.

  \item[\rm(b)]  If $\mathcal P$ is a semichain, then each $\mathcal P$-system of subspaces of a unitary space is isometric to an orthogonal direct sum, uniquely  determined up to permutation of summands, of $\mathcal P$-systems of the form $\mathcal F_1,\dots,\mathcal F_{t},\mathcal F$ and of the form
$\mathcal G_{k,\sigma},\mathcal G_{k}$ in which
$p_k\nprec p_{k+1}$ and $\sigma $ is a positive real number.	
\end{itemize}
\end{theorem}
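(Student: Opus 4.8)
The plan is to prove the existence of the decomposition by induction on the number $s$ of blocks in $\mathcal P_1\prec\dots\prec\mathcal P_s$, and then to deduce uniqueness from indecomposability, pairwise non-isometry, and invariance of the multiplicities. Part~(a) I would treat directly: given the flag $U_1\subseteq\dots\subseteq U_t\subseteq U$, put $W_1=U_1$, $W_k=U_k\ominus U_{k-1}$ for $2\le k\le t$, and $W_{t+1}=U\ominus U_t$, so that $U=W_1\perp\dots\perp W_{t+1}$ and $U_k=W_1\perp\dots\perp W_k$. The span of a single vector of an orthonormal basis of $W_k$ is then an orthogonal summand isometric to $\mathcal F_k$ for $k\le t$ (because it lies in $U_i$ exactly when $i\ge k$) and to $\mathcal F$ for $k=t+1$; since $\dim W_k=\dim U_k-\dim U_{k-1}$ and $\dim W_{t+1}=\dim U-\dim U_t$ are isometry invariants, both existence and uniqueness follow at once.

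For part~(b) the heart of the matter is to peel off the top block $\mathcal P_s$. I would first split off $\dim\!\big(U\ominus\sum_i U_i\big)$ copies of $\mathcal F$ to assume $U=\sum_i U_i$. If $\mathcal P_s=\{p_t\}$ is a singleton, then $p_i\prec p_t$ for every $i<t$, hence $U_t=U$; applying the inductive hypothesis to $(U_1,\dots,U_{t-1})_U$ over $\mathcal P_1\prec\dots\prec\mathcal P_{s-1}$ and appending the full space in position $t$ sends $\mathcal F_k\mapsto\mathcal F_k$, $\mathcal F\mapsto\mathcal F_t$, $\mathcal G_{k,\sigma}\mapsto\mathcal G_{k,\sigma}$, $\mathcal G_k\mapsto\mathcal G_k$, which is exactly the required list.

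If instead $\mathcal P_s=\{p_{t-1},p_t\}$, then every $U_i$ with $i\le t-2$ lies in $A:=U_{t-1}\cap U_t$. Writing $U_{t-1}=A\perp B$ and $U_t=A\perp C$ gives $U=A\perp D$ with $D=B+C$ and $B\cap C=0$, and the classical orthogonal canonical form (principal angles) for the pair $(B,C)$ of subspaces of $D$ splits $D$ into generic two-dimensional pieces, each contributing $\mathcal G_{t-1,\sigma}$ with $\sigma>0$, together with one-dimensional pieces of $B\ominus C$ (each $\mathcal G_{t-1}$) and of $C\ominus B$ (each $\mathcal F_t$). Because $A\perp D$ and each $U_i$ with $i\le t-2$ sits inside $A$, all these summands have zero in positions $1,\dots,t-2$; what is left is $(U_1,\dots,U_{t-2},A,A)_A$, to whose restriction $(U_1,\dots,U_{t-2})_A$ over $\mathcal P_1\prec\dots\prec\mathcal P_{s-1}$ I apply induction, then append $A$ in positions $t-1,t$, sending $\mathcal F_k\mapsto\mathcal F_k$, $\mathcal F\mapsto\mathcal F_{t-1}$, $\mathcal G_{k,\sigma}\mapsto\mathcal G_{k,\sigma}$, $\mathcal G_k\mapsto\mathcal G_k$. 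Assembling the pieces yields the decomposition.

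For uniqueness I would verify that each listed system is indecomposable --- the one-dimensional ones trivially and $\mathcal G_{k,\sigma}$ because $\mathbb C(1,0)$ and $\mathbb C(\sigma,1)$ are distinct and non-orthogonal (their inner product is $\sigma\ne0$) --- and that the systems are pairwise non-isometric, the pattern of nonzero positions separating the types and the principal angle $\cos\theta=\sigma/\sqrt{\sigma^2+1}$ separating the $\mathcal G_{t-1,\sigma}$ for distinct $\sigma>0$. The multiplicities are then invariants, either by a Krull--Schmidt argument for orthogonal decompositions or by writing each one explicitly through the dimensions of the spaces $\sum_{j\in S}U_j$ with their orthogonal complements and the principal angles at the two-element blocks. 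The step I expect to be the main obstacle is precisely the two-element block: one must decompose the top pair $(U_{t-1},U_t)$ simultaneously with the flag carried in their intersection. What makes it tractable is that all lower subspaces lie in $A=U_{t-1}\cap U_t$, so the generic and one-sided parts of the pair decomposition are automatically orthogonal to every $U_i$ with $i\le t-2$; this decouples the pair-of-subspaces problem from the residual chain problem on $A$. Checking that $\sigma$ is a genuine isometry invariant, and that the positive-real parametrization introduces no redundancy, is the delicate remaining point.
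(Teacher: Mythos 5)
Your proof is correct, but it follows a genuinely different route from the paper's. The paper works entirely with matrices: it reduces the classification of $\mathcal P$-systems to the classification of block matrices up to weak unitary $\mathcal P$-equivalence (Lemma \ref{lem1}), proves the canonical form of Theorem \ref{lemfr} by induction on $t$, peeling off the \emph{first} one or two elements of the semichain and reducing the corresponding strips by the singular value decomposition \eqref{llv}, and then deduces Theorem \ref{ttt2} via Lemma \ref{lems}. You stay with the subspaces themselves and peel off the \emph{last} block: your splitting $U=A\perp D$ with $A=U_{t-1}\cap U_t$, together with the observation that all lower $U_i$ lie inside $A$, decouples a pair-of-subspaces problem on $D$ (solved by the classical two-subspace theory of \cite{dixm,hal}, i.e.\ principal angles --- the invariant counterpart of the SVD the paper uses) from the smaller semichain problem on $A$; this is precisely the decoupling the paper performs in matrix form at \eqref{gtw}. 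Your route is shorter and coordinate-free for Theorem \ref{ttt2} alone; the paper's matrix route costs more but its output, Theorem \ref{lemfr}, is reused in the proof of Theorem \ref{ttt1} (both for the family $G_{k,\sigma}$ and for the wildness construction), so the detour is not wasted. The one thin spot in your argument is uniqueness: the paper obtains it from the uniqueness of ranks and singular values at each reduction step, whereas you appeal to ``a Krull--Schmidt argument for orthogonal decompositions.'' That property does hold --- orthogonal summands of a system correspond to subspaces reducing the $*$-closed family of orthogonal projections onto $U_1,\dots,U_t$, so complete reducibility and uniqueness of isotypic components of finite-dimensional $*$-representations apply --- but as written it is an assertion rather than a proof, and should either be justified in this form or replaced by your alternative of explicit invariants (dimension counts together with the multiplicities of the principal angles of the pair $(U_{t-1},U_t)$), which indeed determine the multiplicity of every summand in your list.
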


\section{The matrix form of the main theorems}\label{sss3}

We suggest that there are
matrices $0_{n0}$ and $0_{0n}$ of sizes $n\times 0$ and $0\times n$ for every nonnegative integer $n$; they represent the linear
mappings $0\to {\mathbb C}^n$ and
${\mathbb C}^n\to 0$.

A $\mathcal P$-system ${\cal U}=(U_1,\dots,U_t)_U$ can be given in an orthonormal basis $e_1,\dots,e_m$ of $U$ by a block matrix
\begin{equation*}\label{lwe}
A_{\cal U}=[A_1|\dots|A_t]
\end{equation*}
in which each block $A_i$ is constructed as follows:
\begin{equation}
\label{mal}
\parbox{25em}
{choose a subspace $V_i$ of $U$ such that $V_i+\sum_{p_j\prec p_i}U_j=U_i$ (in particular, $V_1=U_1$), then the columns of $A_i$ are the coordinate vectors of any system of vectors spanning $V_i$ in the basis $e_1,\dots,e_m$.
}
\end{equation}

Conversely, each block matrix $A=[A_1|\dots|A_t]$ with $m$ rows defines the $\mathcal P$-system
\begin{equation}\label{bbr}
f(A):=(U_1,\dots,U_t)_{\mathbb C^m}
\end{equation}
of subspaces of the unitary space $\mathbb C^m$ with the usual scalar product, in which $U_i$ is  spanned by the columns of all $A_i$ such that $p_i\preccurlyeq p_j$.

The block matrix $A_{\cal U}$ is determined by ${\cal U}$ up to a weak unitary $\cal P$-equivalence, which is defined as follows.

\begin{definition}\label{def3}
\begin{itemize}
  \item[(a)]
Two block matrices
\begin{equation}\label{qsy}
A=[A_1|\dots|A_t],\qquad B=[B_1|\dots|B_t]
\end{equation}
are \emph{unitarily $\cal P$-equivalent} if $B$ can be obtained from $A$ by a sequence of the following transformations:
\begin{itemize}
  \item[(i)]
arbitrary unitary transformations of rows,

  \item[(ii)]
arbitrary elementary transformations of columns within each vertical strip,

  \item[(iii)]
additions of linear combinations of columns of strip $i$ to columns of strip $j$ if $p_i\prec p_j$.
\end{itemize}

  \item[(b)]
Two block matrices \eqref{qsy} are \emph{weakly unitarily $\cal P$-equivalent} if one can adjoin zero columns to some of their blocks and obtain unitarily $\cal P$-equivalent block matrices.
\end{itemize}
\end{definition}

We say that a block matrix $A=[A_1|\dots|A_t]$ is of \emph{size} $m\times (n_1,\dots,n_t)$ if each $A_i$ is $m\times n_i$. Unitarily $\cal P$-equivalent matrices have the same size.

It is easy to see that two block matrices $A$ and $B$ of size $m\times (n_1,\dots,n_t)$ are unitarily $\cal P$-equivalent if and only if $B$ can be obtained from $A$ by transformations
\begin{equation}\label{dru}
A\mapsto RAS,\qquad
\begin{array}{c}
\text{$R$ is unitary, $S=[S_{ij}]_{i,j=1}^t$ is nonsingular} \\ \text{and upper block-triangular,
$S_{ij}$ is $n_i\times n_j$,} \\
\text{and $S_{ij}=0$ if $p_i\not\prec p_j.$}
\end{array}
\end{equation}

Due to the following lemma, the problem of classifying $\mathcal P$-systems up to isometry is reduced to the problem of classifying block matrices up to weak unitary $\cal P$-equivalence.

\begin{lemma}\label{lem1}
\begin{itemize}
  \item[\rm(a)] For each
$\mathcal P$-system ${\cal U}$, there exists a block matrix $A=[A_1|\dots|A_t]$ such that $f(A)$ defined in \eqref{bbr} is isometric to ${\cal U}$.

  \item[\rm(b)] Two block matrices $A$ and $B$ are weakly unitarily $\cal P$-equivalent if and only if $f(A)$ and $f(B)$ are isometric.
\end{itemize}
\end{lemma}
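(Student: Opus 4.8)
The plan is to use the reconstruction \eqref{mal} as a dictionary between $\mathcal P$-systems and block matrices and to verify everything by hand. For (a) I would take $\mathcal U=(U_1,\dots,U_t)_U$, fix an orthonormal basis of $U$ (which identifies $U$ isometrically with $\mathbb C^m$ and carries each $U_i$ to a subspace $U_i'$), and let $A=A_{\mathcal U}$ be built as in \eqref{mal} from complements $V_i$ satisfying $V_i+\sum_{p_j\prec p_i}U_j=U_i$. Since $\operatorname{col}(A_j)=V_j'$, it suffices to prove $f(A)_i=\sum_{p_j\preccurlyeq p_i}V_j'=U_i'$, i.e. $\sum_{p_j\preccurlyeq p_i}V_j=U_i$ inside $U$, which I would do by induction along the order. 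The step rests on the identity $\{k:p_k\prec p_i\}=\bigcup_{p_j\prec p_i}\{k:p_k\preccurlyeq p_j\}$: combined with the induction hypothesis it gives $\sum_{p_k\preccurlyeq p_i}V_k=V_i+\sum_{p_k\prec p_i}U_k=U_i$ by \eqref{mal}. Thus $f(A)=(U_1',\dots,U_t')_{\mathbb C^m}$ is isometric to $\mathcal U$.

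For the ``if'' part of (b) I would check that each generator of weak unitary $\mathcal P$-equivalence preserves $f$ up to isometry. Adjoining zero columns leaves every column space unchanged, hence fixes $f$ exactly. By \eqref{dru} the remaining moves are $A\mapsto RAS$ with $R$ unitary and $S$ nonsingular satisfying $S_{ij}=0$ unless $p_i\preccurlyeq p_j$; such $S$ are precisely the invertible elements of a finite-dimensional (incidence-type) algebra, so $S^{-1}$ has the same shape. Computing the blocks of $AS$ gives $\operatorname{col}((AS)_k)\subseteq f(A)_k$, whence $f(AS)_i\subseteq f(A)_i$, and applying this to the pair $(AS,S^{-1})$ yields $f(AS)=f(A)$. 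Finally $f(RC)_i=R\,f(C)_i$ and $R$ is an isometry, so $f(RAS)$ is isometric to $f(A)$.

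The substantive direction is the ``only if'' part. Given an isometry $\varphi$ with $\varphi(f(A)_i)=f(B)_i$, I would let $R$ be its unitary matrix; then $A$ is weakly unitarily $\mathcal P$-equivalent to $RA$ and $f(RA)=f(B)$ as honest equalities of subspaces, so it suffices to prove: $f(A)=f(B)=:\mathcal U$ implies $A$ and $B$ are weakly unitarily $\mathcal P$-equivalent. I would prove this by reducing any $A$ with $f(A)=\mathcal U$ to the canonical matrix $A_{\mathcal U}$ using only operations (ii) and (iii) of Definition~\ref{def3} and adjunction of zero columns, processing the strips in increasing index order (legitimate since $p_j\prec p_i\Rightarrow j<i$). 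When strip $i$ is reached the lower strips already carry bases of complements $V_j$, and the computation of (a) shows their columns span $L_i:=\sum_{p_j\prec p_i}U_j$; using (ii) I would separate the columns of $A_i$ into ones independent modulo $L_i$ and ones lying in $L_i$, and using (iii) I would clear the latter, leaving a basis of a complement of $L_i$ in $U_i$ followed by zero columns.

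To close the argument I would note that $A_{\mathcal U}$ is independent of the choices in \eqref{mal} up to unitary $\mathcal P$-equivalence: changing the basis of a $V_i$ is an operation (ii), while changing the complement $V_i$ alters each column of $A_i$ by an element of $L_i$, i.e. by a combination of columns of lower strips, which is an operation (iii). The main obstacle is exactly this reduction. One must keep track, strip by strip, that the already-processed strips provide a spanning set of $L_i$ (so the clearing by (iii) is admissible), and verify that the resulting column numbers $\dim U_i-\dim L_i$ depend only on $\mathcal U$; then the canonical forms obtained from $A$ and from $B$ differ only by zero columns, giving $A$ and $B$ weakly unitarily $\mathcal P$-equivalent.
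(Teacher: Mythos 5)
Your proof is correct and follows essentially the same route as the paper's: part (a) by the same induction along the order, the ``if'' direction of (b) by checking that padding and the transformations \eqref{dru} preserve $f$ up to the isometry $v\mapsto Rv$, and the ``only if'' direction by normalizing $R=I$ and then matching the two matrices by column operations (ii)--(iii) against a form built from bases of complements of $\sum_{p_j\prec p_i}U_j$ in $U_i$. The only difference is bookkeeping: the paper organizes this column reduction as an induction on $t$ (stripping the last block), whereas you sweep through the strips in increasing order toward the explicit canonical matrix $A_{\mathcal U}$ and invoke its independence of the choices in \eqref{mal}; these are the same argument.
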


\begin{proof}
(a) If ${\cal U}=(U_1,\dots,U_t)_U$ is a $\mathcal P$-system and $A_{\cal U}=[A_1|\dots|A_t]$ is constructed by \eqref{mal}, then $f(A_{\cal U})$ is isometric to ${\cal U}$ because
$A_{\cal U}$ can be also constructed by induction as follows:
$A_1$ is a matrix whose columns are the coordinate vectors $[a_{1}]_e,\dots,[a_{n_1}]_e$ in some orthonormal basis $e_1,\dots,e_m$ of $U$ of any system of vectors $a_{1},\dots,a_{n_1}$ spanning $U_1$. Let $A_1,\dots,A_k$ ($k<t$) have been constructed. Then $A_{k+1}$ is an arbitrary matrix such that its columns and the columns of all $A_i$ with $p_i\prec p_{k+1}$ are the coordinate vectors of a system of vectors spanning $U_{k+1}$.

(b) ``$\Rightarrow$'' Let $A$ and $B$ be weakly unitarily $\cal P$-equivalent. Then one can adjoin zero columns to some of their blocks and obtain unitarily $\cal P$-equivalent block matrices $\tilde A$ and $\tilde B$.
There exist $R$ (let its size be $m\times m$) and $S$ satisfying \eqref{dru} such that $R\tilde AS=\tilde B$.
Then $f(A)$ and $f(B)$ are isometric via the isometry \[\varphi: \mathbb C^m\to \mathbb C^m,\qquad v\mapsto Rv.\]

``$\Leftarrow$'' Let
$f(A)$ and $f(B)$ be isometric via an isometry $\varphi: \mathbb C^m\to \mathbb C^m$ given by an $m\times m$ matrix $R$; i.e., $\varphi(v)=Rv$ for all $v\in\mathbb C^m$. We need to construct
block matrices $\tilde A$ and $\tilde B$ (adjoining zero columns to some blocks of $A$ and $B$) and a matrix $S$ satisfying \eqref{dru} so that $R\tilde AS=\tilde B$. Replacing $A$ and $\tilde A$ by $RA$ and $R\tilde A$, we reduce our consideration to the case $R=I$; that is, to the case
\begin{equation}\label{wut}
f(A)=f(B).
\end{equation}

We use induction on $t$.

If $t=1$, then we adjoin zero columns to $A$ or $B$ so that the obtaining matrices $\tilde A$ and $\tilde B$ have the same number of columns. By \eqref{wut}, the columns of $\tilde A$ and $\tilde B$ span the same vector space; let $v_1,\dots,v_r$ be its basis. Since both $\tilde A$ and $\tilde B$ can be reduced to the matrix $[v_1\dots v_r0\dots 0]$ by column-transformations, there exists a nonsingular $S$ such that $\tilde AS=\tilde B$.

Denote by $A'$ and $B'$ the block matrices $A$ and $B$ without the last blocks $A_t$ and $B_t$. Denote by $\mathcal P'$ the poset $\mathcal P$ without the element $p_t$. By \eqref{wut}, $A'$ and $B'$ determine the same $\mathcal P'$-system $f(A')=f(B')$. By induction hypothesis, we can construct block matrices $\tilde A'$ and $\tilde B'$ adjoining zero columns to some blocks of $A'$ and $B'$ and a matrix $S'$ satisfying \eqref{dru} so that $\tilde A'S'=\tilde B'$. Replacing the submatrices $A'$ and $B'$ of $A$ and $B$ with $\tilde A'S'$ and $\tilde B'$, we make $A'=B'$. Adjoin zero columns to $A$ or $B$ to the right so that the obtaining matrices $\tilde A$ and $\tilde B$ have the same number of columns. By \eqref{wut}, $f(\tilde A)=f(\tilde B)=(U_1,\dots,U_t)_{\mathbb C^m}$, in which  $U_t$ is spanned by columns of all $A_i$ such that $p_i\preccurlyeq p_t$. Let $v_1,\dots,v_r$ be a system linearly independent vectors such that it and the columns of all $A_i(=B_i)$ with $p_i\prec p_t$ span $U_t$. Since both $\tilde A$ and $\tilde B$ can be reduced to the block matrix $[A_1|\dots|A_{t-1}|v_1\dots v_r0\dots 0]$ by column-transformations from Definition \ref{def3}(a), there exists a nonsingular $S$ satisfying \eqref{dru} such that $\tilde AS=\tilde B$.\end{proof}

The orthogonal direct sum $\mathcal U\perp \mathcal V$ of $\cal P$-systems $\mathcal U=(U_1,\dots,U_t)_U$ and $\mathcal V=(V_1,\dots,V_t)_V$ defined in  \eqref{ouo} corresponds to the \emph{block direct sum}
\begin{equation}\label{grp}
	A_{\cal U}\boxplus A_{\cal V}:=
\left[\begin{array}{cc|cc|c|cc}
A_1&0&A_2&0&\ldots&A_t&0\\0& B_1&0&B_2& \ldots&0&B_t\end{array}\right]
\end{equation}
of block matrices $A_{\cal U}=[A_1|\dots|A_t]$ and
$A_{\cal V}=[B_1|\dots|B_t]$.
A block matrix is \emph{unitarily indecomposable} if its size is not $0\times (0,\dots,0)$ and it is not unitarily $\cal P$-equivalent to a block direct sum of block matrices of smaller sizes.

Let us reformulate Theorem \ref{ttt2} in the matrix form.

Define the following unitarily indecomposable block matrices for a semichain $\mathcal P$:
\begin{description}
  \item[$E_k$]$\!\!
:=[0_{10}|\dots|0_{10}|1|0_{10}|\dots|0_{10}]
$
with $1$ in the $k$th strip, $k=1,\dots,t$;

  \item[$F$]$\!\!:=[0_{10}|\dots|0_{10}]
$;

  \item[$G_{k,\sigma}$]$\!\!:=\left[ \begin{array}{c|c|c|c|c|c|c|c}
0_{20}&\dots&0_{20}&
\begin{matrix}1\\0\end{matrix}
&
\begin{matrix}\sigma \\1\end{matrix}
&0_{20}&\dots&0_{20}
\end{array}\right]$ with $\begin{bmatrix}1\\0\end{bmatrix}$ in the $k$th strip, for each $p_k\nprec p_{k+1}$ and for each positive real $\sigma $;

  \item[$H_{k}$]$\!\!:=[0_{10}|\dots|0_{10}|1|1|0_{10}|
\dots|0_{10}]
$ with 1 in the $k$th and $(k+1)$st strips, for each $p_k\nprec p_{k+1}$;

\item[$L_k$]$\!\!:
=[0_{00}|\dots|0_{00}|0_{01}|0_{00}|\dots|0_{00}]
$
with $0_{01}$ in the $k$th strip, $k=1,\dots,t$.
\end{description}

The following matrix form of Theorem \ref{ttt2} will be proved in Section \ref{kpkd}.

\begin{theorem}\label{lemfr}
\begin{itemize}
  \item[\rm(a)] Let $\mathcal P$ be a chain. Each block matrix $A=[A_1|\dots|A_t]$ is unitarily $\cal P$-equivalent to a block direct sum of block matrices of the form $E_1,\dots,E_t,F,L_1,\dots,L_t$.
This block direct sum is uniquely determined, up to permutation of summands.

  \item[\rm(b)]  Let $\mathcal P$ be a semichain. Each block matrix $A=[A_1|\dots|A_t]$  is unitarily $\cal P$-equivalent to a block direct sum of block matrices of the form $E_1,\dots,E_t,F,L_1,\dots,L_t$, and $G_{k,\sigma },H_k$ in which
$p_k\nprec p_{k+1}$ and $\sigma $ is a positive real number.
This block direct sum is uniquely determined, up to permutation of summands.
\end{itemize}
\end{theorem}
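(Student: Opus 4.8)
The plan is to prove Theorem \ref{lemfr} directly; by the observation preceding \eqref{dru} I may replace the transformations (i)--(iii) of Definition \ref{def3} by the matrix transformations $A\mapsto RAS$ of \eqref{dru}. Writing the semichain as $\mathcal P_1\prec\dots\prec\mathcal P_s$, I would argue by induction on the number $s$ of blocks, processing them from left to right. Suppose $\mathcal P_1,\dots,\mathcal P_{i-1}$ have been reduced and the rows they occupy have already been split off as summands, so that the surviving rows carry the strips of $\mathcal P_i$. Because $\mathcal P_i\prec\mathcal P_{i+1}\prec\dots\prec\mathcal P_s$, operation (iii) lets me add columns of the strips of $\mathcal P_i$ to every later strip. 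Hence, once the strips of $\mathcal P_i$ are brought to a canonical form whose nonzero columns are standard coordinate vectors on the rows they occupy, I can clear those rows throughout all later strips; the corresponding rows then decouple as a block direct summand, and the induction hypothesis applies to the remaining rows together with $\mathcal P_{i+1}\prec\dots\prec\mathcal P_s$, which is again a semichain. Rows left untouched by every strip become $F$ summands, and surplus zero columns of strip $j$ become $L_j$ summands.

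If $\mathcal P_i=\{p_k\}$ is a one-element block (the only case occurring when $\mathcal P$ is a chain, which gives part (a)), the reduction is elementary: transformations (i) and (ii) reduce $A_k$ on the surviving rows to orthonormal coordinate vectors $e_1,\dots,e_r$ followed by zero columns, producing $r$ copies of $E_k$ and $n_k-r$ copies of $L_k$. The crux is a two-element block $\mathcal P_i=\{p_k,p_{k+1}\}$ with $p_k\nprec p_{k+1}$: here strips $k$ and $k+1$ cannot feed columns to one another, so I must classify the pair of column spaces $M=\operatorname{col}(A_k)$ and $N=\operatorname{col}(A_{k+1})$ up to a simultaneous unitary change of the ambient coordinates and independent column operations in each strip --- exactly the classical problem of two subspaces of a unitary space. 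I would solve it by first reducing $A_k$ to orthonormal pivots spanning $M$, after which the residual freedom on $A_{k+1}$ consists of the block-diagonal unitaries preserving the splitting $\mathbb C^m=M\oplus M^{\perp}$ together with free column operations. The principal-angle (CS) decomposition then splits the ambient space into the pieces $M\cap N$, $M\cap N^{\perp}$, $M^{\perp}\cap N$, $(M+N)^{\perp}$, and a sum of two-dimensional blocks in which $M$ and $N$ are distinct lines meeting at an angle $\theta\in(0,\pi/2)$. Rescaling the single column in each strip by column operations, these translate precisely into $H_k$, $E_k$, $E_{k+1}$, $F$, and $G_{k,\sigma}$ with $\sigma=\cot\theta$.

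Uniqueness splits into a discrete and a continuous part. The multiplicities of the discrete summands are read off from the dimensions of the subspaces $\sum_{j\in I}U_j$ associated with order ideals $I$ of $\mathcal P$ (equivalently, ranks of the submatrices formed by the strips in $I$); a unitary $R$ and an upper block-triangular nonsingular $S$ in \eqref{dru} preserve each such span, so these dimensions are isometry invariants of the $\mathcal P$-system and fix the discrete data. For the continuous parameters I would show directly that $G_{k,\sigma}$ and $G_{k,\sigma'}$ are unitarily $\mathcal P$-equivalent only if $\sigma=\sigma'$: since $p_k$ and $p_{k+1}$ are incomparable, every admissible $S$ is diagonal, $S=\operatorname{diag}(s_k,s_{k+1})$, and solving $R\,\bigl[\begin{smallmatrix}1&\sigma\\0&1\end{smallmatrix}\bigr]\operatorname{diag}(s_k,s_{k+1})=\bigl[\begin{smallmatrix}1&\sigma'\\0&1\end{smallmatrix}\bigr]$ with $R$ unitary forces $|s_k|=|s_{k+1}|=1$ and $\sigma'=\sigma\,\bar s_k s_{k+1}$; as $\sigma,\sigma'$ are positive reals and $|\bar s_k s_{k+1}|=1$, this yields $\sigma'=\sigma$. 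Thus $\sigma$ records the geometric angle $\operatorname{arccot}\sigma$ between the two lines, and the whole block direct sum is determined up to permutation of summands.

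I expect the main obstacle to be precisely this two-subspace classification together with the invariance of $\sigma$. Over a field, without a scalar product, any two distinct lines in $\mathbb C^2$ are equivalent, so all the $G_{k,\sigma}$ would collapse; it is the unitarity of $R$ --- the modulus constraints in the computation above --- that makes the angle a complete continuous invariant and produces a one-parameter family of indecomposables, marking the boundary between the representation-finite chains and the tame semichains. The one remaining point requiring care, though routine once operation (iii) is in hand, is to check that the principal-angle decomposition is compatible with the left-to-right induction, i.e.\ that reducing and clearing one block never disturbs the canonical form already achieved on the earlier blocks.
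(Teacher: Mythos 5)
Your proposal is correct and follows essentially the same route as the paper: the same left-to-right induction on the blocks of the semichain, in which a one-element block is split off by a rank/SVD reduction and an incomparable pair $\{p_k,p_{k+1}\}$ is treated as a two-subspace problem whose continuous invariant produces the summands $G_{k,\sigma}$. Your principal-angle (CS) decomposition with $\sigma=\cot\theta$ is precisely the paper's singular-value reduction of the block $B_{21}$ yielding the canonical form \eqref{gtw}, and your uniqueness argument (invariance of the ideal ranks and of the angles, i.e.\ singular values, under the transformations \eqref{dru}, plus the explicit $2\times 2$ check for $G_{k,\sigma}$) coincides in substance with the paper's.
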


\section{Proof of Theorems \ref{ttt2} and \ref{lemfr}}
\label{kpkd}

\begin{lemma}\label{lems}
Theorem \ref{ttt2} follows from Theorem \ref{lemfr}.
\end{lemma}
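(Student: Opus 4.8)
The plan is to carry the matrix classification of Theorem~\ref{lemfr} over to $\mathcal P$-systems through the correspondence $f$ of \eqref{bbr} together with Lemma~\ref{lem1}. Three facts drive the argument. By Lemma~\ref{lem1}(a), every $\mathcal P$-system $\mathcal U$ is isometric to $f(A)$ for some block matrix $A$. By Lemma~\ref{lem1}(b), $f(A)$ and $f(B)$ are isometric exactly when $A$ and $B$ are weakly unitarily $\mathcal P$-equivalent, hence in particular when they are unitarily $\mathcal P$-equivalent. Finally, directly from \eqref{grp} and \eqref{ouo}, the map $f$ converts block direct sums into orthogonal direct sums, $f(A\boxplus B)=f(A)\perp f(B)$, because the two groups of rows span orthogonal subspaces of $\mathbb C^{m_1+m_2}=\mathbb C^{m_1}\perp\mathbb C^{m_2}$, and the spanning condition defining $f$ splits accordingly.

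Next I would read off the image under $f$ of each indecomposable block matrix. A short computation from the definition of $f$ gives $f(F)=\mathcal F$ and $f(G_{k,\sigma})=\mathcal G_{k,\sigma}$, while each $L_k$, having no rows, maps to the trivial system on the zero space, which is neutral for $\perp$ and may be discarded. The remaining identifications depend on the block structure of the semichain: if $p_k$ is alone in its block then $f(E_k)=\mathcal F_k$, whereas if $\{p_k,p_{k+1}\}$ is a two-element block then $f(E_k)=\mathcal G_k$, $f(E_{k+1})=\mathcal F_{k+1}$, and $f(H_k)=\mathcal F_k$. In this way $E_1,\dots,E_t,F,G_{k,\sigma},H_k$ map bijectively onto the indecomposable systems $\mathcal F_1,\dots,\mathcal F_t,\mathcal F,\mathcal G_{k,\sigma},\mathcal G_k$. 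For existence I would then take $\mathcal U$, write it as $f(A)$, decompose $A$ by Theorem~\ref{lemfr} into a block direct sum of indecomposable matrices, and apply $f$: using the third fact above and the dictionary, and dropping the $L_k$ summands, this exhibits $\mathcal U$ as an orthogonal direct sum of systems of the required form (only $\mathcal F_1,\dots,\mathcal F_t,\mathcal F$ in the chain case of part~(a)).

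For uniqueness, suppose $\mathcal U$ has two such orthogonal decompositions into indecomposable systems. Each summand is $f(M)$ for a unique block matrix $M$ among $E_1,\dots,E_t,F,G_{k,\sigma},H_k$, so the two decompositions lift to block direct sums $M_1\boxplus\dots\boxplus M_a$ and $M_1'\boxplus\dots\boxplus M_b'$ whose images under $f$ are both isometric to $\mathcal U$. By Lemma~\ref{lem1}(b) these two block matrices are weakly unitarily $\mathcal P$-equivalent; that is, after adjoining zero columns --- equivalently, after forming block direct sums with suitable matrices $L_k$ --- they become unitarily $\mathcal P$-equivalent. The uniqueness clause of Theorem~\ref{lemfr} then forces the two indecomposable matrix decompositions to coincide up to permutation, and deleting the $L_k$ summands (the only indecomposables with no rows) yields that $M_1,\dots,M_a$ and $M_1',\dots,M_b'$ agree as multisets, hence so do the two system decompositions. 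I expect the main work to be the dictionary of the second paragraph --- in particular keeping straight which element of a two-element block carries index $k$ and which $k+1$, so that $E_k\mapsto\mathcal G_k$ but $H_k\mapsto\mathcal F_k$ --- together with the bookkeeping that turns weak $\mathcal P$-equivalence (adjoining zero columns) into block summation with the degenerate matrices $L_k$, which is exactly what reconciles the longer matrix list (containing $L_1,\dots,L_t$) with the shorter system list.
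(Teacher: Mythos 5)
Your proposal is correct and takes essentially the same route as the paper: the paper's own proof consists precisely of your dictionary ($f(E_k)=\mathcal F_k$ or $\mathcal G_k$ according as $p_k\prec p_{k+1}$ or $p_k\nprec p_{k+1}$, $f(F)=\mathcal F$, $f(G_{k,\sigma})=\mathcal G_{k,\sigma}$, $f(H_k)=\mathcal F_k$) together with an appeal to Lemma~\ref{lem1}. The extra bookkeeping you supply --- $f(A\boxplus B)=f(A)\perp f(B)$, the identification of adjoining zero columns with block summation by the $L_k$, and the lifting argument transferring the uniqueness clause of Theorem~\ref{lemfr} to Theorem~\ref{ttt2} --- is exactly what the paper leaves implicit.
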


\begin{proof} This statement follows from Lemma \ref{lem1} since
\begin{itemize}
  \item[(a)] if $\mathcal P$ is a chain and $C$ is one of the block matrices $E_1,\dots,E_t,F$, then $f(C)$ is one of the $\cal P$-systems $\mathcal F_1,\dots,\mathcal F_t,\mathcal F$, respectively;
  \item[(b)] if $\mathcal P$ is a semichain, then $f(E_k)$ is $\mathcal F_k$ if $p_k\prec p_{k+1}$ or $\mathcal G_k$ if $p_k\nprec p_{k+1}$, $f(F)=\mathcal F$, $f(G_{k,\sigma })=\mathcal G_{k,\sigma }$, and $f(H_k)=\mathcal F_k$ with $p_k\nprec p_{k+1}$.
\end{itemize}
\end{proof}

\begin{proof}[Proof of Theorem \ref{lemfr}]
(a) Let $\mathcal P$ be a chain. Let us prove that each block matrix $A$ is unitarily $\cal P$-equivalent to exactly one block matrix of the form
\begin{equation}\label{aky}
\left[ \begin{array}{cc|cc|c|cc}
I&0&0&0&\dots&0&0\\0&0&I&0&\dots&0&0\\ \dots&\dots&\dots&\dots&\dots&\dots&\dots\\	 0&0&0&0&\dots&I&0\\0&0&0&0&\dots&0&0
\end{array}\right].
\end{equation}

Each complex matrix $M$ possesses a singular value decomposition \cite[Theorem 7.3.5]{hor}:
\begin{equation}\label{llv}
M=U\Sigma_M V,\qquad U,V\text{ are unitary, }\ \Sigma_M=\diag(\sigma_1,\dots,\sigma_r)\oplus 0,
\end{equation}
in which $\sigma_1\ge\dots\ge\sigma_r>0$ are the positive square roots of the nonzero eigenvalues of $MM^*$ and hence $\Sigma_M$ is uniquely determined.

We reduce $A=[A_1|\dots|A_t]$ to the form \eqref{aky} by transformations (i)--(iii) from Definition \ref{def3}. First reduce $A_1$ to the form $\Sigma_{A_1}$ using (i) and (ii), then to the form $I_r\oplus 0$ using (ii) and transform $A$ to the form
\begin{equation}\label{kec}
\left[ \begin{array}{cc|c|c|cccc}
I_r&0&0&\dots&0\\0&0&A_2'&\dots&A_t'
\end{array}\right]
\end{equation}
using (iii).
Reduce $[A_2'|\dots|A_t']$ analogously, and so on until obtain \eqref{aky}.

The matrix \eqref{aky} is uniquely determined by $A$, which is proved by induction:
\begin{itemize}
  \item $r$ is the rank of $A_1$;
  \item
it is straightforward to check that two block matrices $[A_1|\dots|A_t]$ and $[B_1|\dots|B_t]$ are unitarily $\cal P$-equivalent if and only if
$[A_2'|\dots|A_t']$ and $[B_2'|\dots|B_t']$ are unitarily $\mathcal P'$-equivalent, in which $\mathcal P'$ is the chain $\mathcal P$ without the first element.

\end{itemize}
Each block matrix \eqref{aky} is a block direct sum, uniquely  determined up to permutation of summands, of matrices of the form $E_1,\dots,E_t,F,L_1,\dots,L_t$.

(b) Let $\mathcal P$ be a semichain. If $p_1\prec p_{2}$, then we reduce $A$ to the form \eqref{kec}, which is a block direct sum of matrices of the form $E_1$ and $L_1$ and the matrix $[0_{m'0}|A_2'|\dots|A_t']$.
The summands of the form $E_1,L_1$ are determined by $A$ uniquely up to permutation. The matrix $A':=[A_2'|\dots|A_t']$ is uniquely determined up to ${\cal P}'$-equivalence, in which $\mathcal P'$ is the semichain $\mathcal P$ without the first element.
Reasoning by induction on $t$, we assume that the statement (b) holds for $A'$. Then it holds for $A$ (each summand of $A'$ gives the summand of $A$ with the empty first strip).

Let $p_1\nprec p_{2}$. We reduce $A$ by transformations (i)--(iii) from Definition \ref{def3}; i.e., by transformations $A\mapsto RAS$ defined in  \eqref{dru}.
First we make $A_1=I_r\oplus 0$ using (i) and (ii) and then reduce the other strips of $A$ by those transformations \eqref{dru} that preserve $A_1$ (i.e., $R$ and $S$ must satisfy $RA_1S_{11}=A_1$). Since $R$ is unitary, it has the form
\begin{equation}\label{jop}
 R=R_1\oplus R_2,\qquad R_1 \text{ is $r$-by-$r$.}
\end{equation}

Therefore, the matrix \[A_2=
\begin{bmatrix}
 A_{21} \\
A_{22}  \\
\end{bmatrix},\qquad \text{$A_{21}$ is $r$-by-$ n_2$,}\]
is reduced by unitary row-transformations within horizontal strips
and elementary column-transformations.
We reduce $A$ to the form
\begin{equation}\label{gtw}
\begin{MAT}(e){|ccc|c.cc|c|c|c|}
\first-
I&0&0&0&I_s&0&   0&\dots&0\\
0&I&0&\Sigma &0&0&   0&\dots&0\\.
0&0&0&I_l&0&0&   0&\dots&0\\
0&0&0&0&0&0&   A_3'&\dots&A_t'
\addpath{(3,3,.)r}\\-
\end{MAT}\,, \quad
\begin{matrix}
\Sigma=\diag(\sigma_1,\dots,\sigma_q)\oplus 0\\ \sigma_1\ge\dots\ge\sigma_q>0
\end{matrix}
\end{equation}
as follows. First we reduce $A_{22}$ to the form $I_l\oplus 0$ and partition $A_{21}$ into vertical strips
$[\begin{MAT}(@){c.c}
B_{1}&B_{2}\\
\end{MAT}]
$ prolonging the partition of $A_{22}$. Then reduce $B_{2}$ to the form $I_s\oplus 0$ and partition $B_{1}$ into horizontal strips prolonging the partition of $B_{2}$:
\[
A_{21}=[\begin{MAT}(@){c.c}
B_{1}&B_{2}\\
\end{MAT}]=\left[\begin{MAT}(@){c.cc}
B_{11}&I_s&0\\
B_{21}&0&0
\addpath{(0,1,.)r}\\
\end{MAT}\right]
\]
Make $B_{11}=0$ by adding linear combinations of columns of $I_s$.

We can reduce $B_{21}$ by unitary row-transformations. We can also reduce $B_{21}$ by unitary column-transformations since elementary column-transformations with $B_{21}$ spoil $I_l$, which can be restored only by unitary row-transformations. Therefore, $B_{21}$ is reduced to the form $\Sigma=\Sigma_{B_{21}}$ defined in \eqref{llv}.

At last, we make zero all entries in $A_3,\dots,A_t$ to the right of $I_r$ in $A_1$ and to the right of $I_l$ in $A_2$ and obtain \eqref{gtw}.

All blocks of \eqref{gtw} are uniquely determined by $A$, except for $A_3',\dots,A_t'$. The matrix $[A_3'|\dots|A_t']$ is uniquely determined, up to unitary $\mathcal P''$-equivalence, in which $\mathcal P''$ is the semichain obtained from $\mathcal P$ by deleting the first two elements.

The matrix \eqref{gtw} is a block direct sum of matrices of the form $E_1$, $E_2$, $G_{1,\sigma }$, $H_1,$ $L_1,$ $L_2$, and the matrix $[0_{m'0}|0_{m'0}|A_3'|\dots|A_t']$.
The summands of the form $E_1$, $E_2$, $G_{1,\sigma }$, $H_1,$ $L_1,$ $L_2$ are determined by $A$ uniquely, up to permutation. The matrix $A':=[A_3'|\dots|A_t']$ is uniquely determined, up to unitary ${\cal P}''$-equivalence.

Using induction on $t$, we can assume that the statement (b) holds for $A'$. Attaching two empty vertical blocks to the left of each block direct summand of $A'$, we obtain a block direct sum that is unitarily $\mathcal P$-equivalent to $[0_{m'0}|0_{m'0}|A_3'|\dots|A_t']$ and is uniquely determined up to permutation.
\end{proof}

\section{Proof of Theorem \ref{ttt1}} \label{sskw}

\subsection{Proof of the equivalence {\rm(i)}$\Leftrightarrow${\rm(ii)}} \label{sskw1}

(a) If $\mathcal P$ is a chain, then it is unitarily representation-finite by Theorem \ref{ttt2}(a). If $\mathcal P$ is not a chain, then it contains two points $p_k$ and $p_{k+1}$ that are not comparable. The block matrices $G_{k,\sigma}$ from Theorem \ref{lemfr}(b) are not unitarily $\cal P$-equivalent for distinct $\sigma$. By Lemma \ref{lem1}(b), $\mathcal P$ is not unitarily representation-finite.
		
(b) If $\mathcal P$ is a semichain, then it is unitarily tame by Theorem \ref{ttt2}(b). If $\mathcal P$ is not a semichain, then it contains three elements $p_i,p_j,p_k$  ($i<k$) such that $p_j$ is not comparable with $p_i$ and $p_k$; i.e., the Hasse diagram of the subset $\{p_i,p_j,p_k\}$ is
\begin{equation}\label{lce}
\begin{split}
\xymatrix@=10pt{
{p_k}\\
{p_i}\ar@{-}[u]&{p_j}&\qquad\text{or}
\qquad&{p_i}&{p_j}
&{p_k}}
\end{split}
\end{equation}

For each $4n$-by-$2n$ matrix $M$, define the block matrix $A(M)=[A_1|\dots|A_t]$ in which
\[
[A_i|A_j|A_k]:=\left[
 \begin{array}{c|c|c}
I_{4n}&\Sigma &0\\0&I_{4n}&M
\end{array}\right],\quad \Sigma:=I_n\oplus 2I_n\oplus 3I_n\oplus 4I_n,
\]
and the other strips are empty: $A_l:=0_{8n,0}$ if $l\ne i,j,k$. By \eqref{dru}, if $A(M)$ and $A(N)$ are unitarily $\cal P$-equivalent, then there exist a unitary matrix $R$ and a nonsingular matrix
\[
S=\begin{bmatrix}
    S_1 & 0&S_{13} \\
    0&S_2&0 \\0&0&S_3
  \end{bmatrix},\quad S_{13}=0\text{ if } p_i\nprec p_k,
\]
such that
\begin{equation}\label{kde}
R\left[
 \begin{array}{c|c|c}
I_{4n}&\Sigma &0\\0&I_{4n}&M
\end{array}\right]S=\left[
 \begin{array}{c|c|c}
I_{4n}&\Sigma &0\\0&I_{4n}&N
\end{array}\right].
\end{equation}
 By analogy with \eqref{jop}, $R=R_1\oplus R_2$, in which $R_1$ and $R_2$ are $4n\times 4n$ unitary matrices and $R_2=S_2^{-1}$. Therefore, $R_1\Sigma R_2^{-1}=\Sigma$, and so $R_1=U_1\oplus U_2\oplus U_3\oplus U_4$ in which $U_1,\dots, U_4$ are $n$-by-$n$.

Take
\begin{equation}\label{efy}
M:=\begin{bmatrix}
I&0\\0&I\\I&I\\I&X
   \end{bmatrix},\qquad
N:=\begin{bmatrix}
I&0\\0&I\\I&I\\I&Y
   \end{bmatrix}
\end{equation}
in which all blocks are $n\times n$ and $X,Y$ are arbitrary $n\times n$ matrices.

The equality $R_2MS_3=N$ falls into 4 equalities:
\[
U_1[I\ 0]S_3=[I\ 0],\ \ U_2[0\ I]S_3=[0\ I],\ \ U_3[I\ I]S_3=[I\ I],\ \ U_4[I\ X]S_3=[I\ Y].
\]
By the first equality, $S_3$ is lower block-triangular. By the second equality, $S_3$ is upper block-triangular. Hence, $S_3=U_1^{-1}\oplus U_2^{-1}$.
By the third equality, $U_1=U_2$.
By the fourth equality, $U_1=U_2=U_4$ and $U_4XU_4^{-1}=Y$. Therefore, $X$ and $Y$ are unitarily similar.

Conversely, if $X$ and $Y$ are unitarily similar; that is, $VXV^{-1}=Y$ for some unitary $V$, then \eqref{kde} holds for
\[
R=\underbrace{V\oplus\dots\oplus V}_{8 \text{ summands}}\ ,\qquad
S=\underbrace{V^{-1}\oplus\dots\oplus V^{-1}}_{10 \text{ summands}}\ ,
\]
and so $A(M)$ and $A(N)$ are  unitarily $\cal P$-equivalent. Note that $A(M)$ and $A(N)$ with $M$ and $N$ of the form \eqref{efy} are  unitarily $\cal P$-equivalent if and only if they are weakly unitarily $\cal P$-equivalent since they have no summands $[0_{00}|\dots|0_{00}|0_{01}|
0_{00}|\dots|0_{00}]$.

Therefore, the problem of classifying
block matrices up to weakly unitary $\cal P$-equivalence contains the problem of classifying square matrices up to unitary similarity. Lemma \ref{lem1}(b) ensures that $\mathcal P$ is unitarily wild.

\subsection{Proof of the equivalence {\rm(ii)}$\Leftrightarrow${\rm(iii)}} \label{sskw3}

(ii)$\Rightarrow$(iii). Let $\cal P$ be a chain. By \eqref{khtr},
\[
u_{\mathcal P}(x_0,x_1,\dots,x_t)=[x_0-(x_1+
\dots +x_t)]^2+x_1^2+
\dots +x_t^2.
\]
Therefore, $u_{\mathcal P}$ is positive definite.

Let $\cal P$ be a semichain \eqref{kjy}.
Suppose first that each $\mathcal P_i$ consists of two incomparable elements; that is, $\cal P$ is of the form
\begin{equation*}\label{ynp}
\{p_1,p_2\} \prec \{p_3,p_4\} \prec
\dots \prec\{p_{2s-1},p_{2s}\}
\end{equation*}
Then
\begin{align}\nonumber
u_{\mathcal P}(x_0,x_1,\dots,x_{2s})=&
[x_0-(x_1+ x_3+x_5+
\dots +x_{2s-1})\\&-(x_2+ x_4+x_6+
\dots +x_{2s})]^2 \label{dph}
\\&+(x_1-x_2)^2+
(x_3-x_4)^2+\dots+(x_{2s-1}-x_{2s})^2.
\nonumber
\end{align}
Therefore, $u_{\mathcal P}$ is nonnegative definite.
Suppose now that some of $\mathcal P_i$'s in \eqref{kjy} consist of one element.
Replacing $x_{2j}$ in \eqref{dph} by $0$ for each one-element $\mathcal P_j$, and renumbering the remaining $x_i$, we obtain $u_{\mathcal P}$, which is also nonnegative definite.

(ii)$\Leftarrow$(iii).
Suppose that $\cal P$ is not a chain. Then
it contains two incorporable elements $p_i$ and $p_j$. The form $u_{\mathcal P}(x_0,\dots,x_t)$ defined in \eqref{khtr} is equal to $0$ if $x_0=2$, $x_i=x_j=1$, and the other $x_1,\dots,x_t$ are zero. Therefore, $u_{\mathcal P}$ is not positive definite.

Suppose that $\cal P$ is not a semichain. Then
it contains three elements $p_i,p_j,p_k$  ($i<k$) such that $p_j$ is not comparable with $p_i$ and $p_k$; i.e., the Hasse diagram of $\{p_i,p_j,p_k\}$ is \eqref{lce}.
The form $u_{\mathcal P}(x_0,\dots,x_t)$ is equal to $-1$ (for $p_j\prec p_k$) or $-3$ (for $p_j\not\prec p_k$) if $x_0=3$, $x_i=x_j=x_k=1$, and the other $x_1,\dots,x_t$ are zero. Therefore, $u_{\mathcal P}$ is not nonnegative definite.

\section{Systems of subspaces of a vector space}\label{kudr}

By analogy with Definition \ref{def1}, \emph{$\cal P$-systems of subspaces of a vector space} are studied; that is, systems $(U_1,\dots,U_t)_U$ in which $U$ is  a vector space over a field $\mathbb F$ and $U_1,\dots,U_t$ are  its subspaces such that $U_i\subseteq U_j$ if $p_i\prec p_j$. Two systems $(U_1,\dots,U_t)_U$ and $(V_1,\dots,V_t)_V$ are \emph{isomorphic} if there exists a linear bijection $\varphi: U\to V$ such that $\varphi (U_1)=V_1,\dots, \varphi (U_t)=V_t$. Such systems are called \emph{filtered $\mathbb F$-linear representations of $\cal P$} in \cite[Chapter 3]{simson}.

Like \eqref{bbr}, each block matrix $A=[A_1|\dots|A_t]$ with $m$ rows defines the system of subspaces
$\mathcal U_A:=(U_1,\dots,U_t)_{\mathbb F^m}$ in which $U_j$ is the subspace of $\mathbb F^m$ spanned by columns of all $A_i$ such that $p_i\preccurlyeq p_j$. Thus, the theory of $\cal P$-systems of subspaces of a vector space reduces (see details in \cite[Chapter 3]{simson}) to the theory of matrix representations of posets, which was founded by Nazarova and Roiter \cite{NazarovaRoiter} and is presented in \cite{gab_roi,simson}.
A \emph{matrix representation of a poset
${\cal P}=\{p_1,\dots,p_t\}$} is a block matrix $A=[A_1|\dots|A_t]$.
Two matrix representations of $\cal P$ are \emph{isomorphic} if one can be obtained from the other by a sequence of transformations (i)--(iii) from Definition \ref{def3}(a) with ``elementary'' instead of ``unitary'' in (i). The \emph{direct sum} of representations is defined by \eqref{grp}.  A poset $\cal P$ is \emph{representation-finite} if it has only a finite number of nonisomorphic  indecomposable representations.
A poset $\cal P$ is \emph{wild} if the problem of classifying its representations up to isomorphism contains the problem of classifying matrix pairs up to simultaneous similarity $(M,N)\sim (S^{-1}MS,S^{-1}NS)$ (and hence, it contains the problem of classifying any system of linear operators and representations of any poset; see \cite{gel-pon_comm,bel-ser_compl}); the other posets  are \emph{tame}. The \emph{Tits form of $\cal P$} is the integral quadratic form
\begin{equation}\label{kde1}
q_{\mathcal P}(x_0,x_1,\dots,x_t):=x_0^2+x_1^2+\dots+x_t^2 + \sum_{p_i\prec p_j} x_ix_j - x_0(x_1+\dots+x_t)
\end{equation}
in which the sum is taken over all pairs $p_i,p_j\in\cal P$ satisfying $p_i\prec p_j$.

The nonunitary analog of Theorem \ref{ttt1} is the following classical result for representations of a finite poset $\cal P$ over an algebraically closed field $\mathbb F$ (that is, for $\cal P$-systems of subspaces of a vector space over $\mathbb F$):
\begin{itemize}
  \item[\rm(a)] $\mathcal P$ is representation-finite if and only if $\mathcal P$
does not contain a full poset whose Hasse diagram is one of the forms
\[
\begin{split}
\xymatrix@C=4pt@R=7pt{
&&&&%;
&&&&&%;
&&&&&%;
&&&&*{\ci}&%;
&&&&
     \\%%%%%%%%%%%%%%
&&&&%;
&&&&&%;
&&&&&%;
&&&&*{\ci}\ar@{-}[u]&%;
&&&&*{\ci}
     \\%%%%%%%%%%%%%%
&&&&%;
&&&&&%;
&&&*{\ci}&*{\ci}&%;
&&&&*{\ci}\ar@{-}[u]&%;
&&&&*{\ci}\ar@{-}[u]
     \\%%%%%%%%%%%%%%
&&&&%;
&&*{\ci}&*{\ci}&*{\ci}&%;
&&&*{\ci}\ar@{-}[u]&*{\ci}\ar@{-}[u]&%;
&&&*{\ci}&*{\ci}\ar@{-}[u]&%;
&&*{\ci}&*{\ci}&*{\ci}\ar@{-}[u]
     \\%%%%%%%%%%%%%%
*{\ci}&*{\ci}&*{\ci}&*{\ci}&
;&&
*{\ci}\ar@{-}[u]&*{\ci}\ar@{-}[u]
&*{\ci}\ar@{-}[u]&
;&&
*{\ci}&*{\ci}\ar@{-}[u]
&*{\ci}\ar@{-}[u]&
;&&
*{\ci}&*{\ci}\ar@{-}[u]
&*{\ci}\ar@{-}[u]&
;&&
*{\ci}\ar@{-}[u]
&*{\ci}\ar@{-}[u]\ar@{-}[lu]
&*{\ci}\ar@{-}[u]
}
\end{split}
\]
$(a\prec b$ iff $a$ is under $b$ and
they are linked by a line$)$, if and only if the Tits form $q_{\mathcal P}$ is weakly positive definite $($see Remark 1 in Section \ref{s2}$)$.	

\item[\rm(b)] $\mathcal P$ is tame if and only if $\mathcal P$
does not contain a full poset whose Hasse diagram is one of the forms
\[
\begin{split}
\xymatrix@C=3pt@R=7pt{
&&&&&%;
&&&&&&%;
&&&&&%;
&&&&&%;
&&&&*{\ci}&%;
&&&&
     \\%%%%%%%%%%%%%%
&&&&&%;
&&&&&&%;
&&&&&%;
&&&&&%;
&&&&*{\ci}\ar@{-}[u]&%;
&&&&*{\ci}
     \\%%%%%%%%%%%%%%
&&&&&%;
&&&&&&%;
&&&&&%;
&&&&*{\ci}&%;
&&&&*{\ci}\ar@{-}[u]&%;
&&&&*{\ci}\ar@{-}[u]
     \\%%%%%%%%%%%%%%
&&&&&%;
&&&&&&%;
&&&&*{\ci}&%;
&&&*{\ci}&*{\ci}\ar@{-}[u]&%;
&&&&*{\ci}\ar@{-}[u]&%;
&&&&*{\ci}\ar@{-}[u]
     \\%%%%%%%%%%%%%%
&&&&&%;
&&&&&*{\ci}&%;
&&*{\ci}&*{\ci}&*{\ci}\ar@{-}[u]&%;
&&&*{\ci}\ar@{-}[u]&*{\ci}\ar@{-}[u]&%;
&&&*{\ci}&*{\ci}\ar@{-}[u]&%;
&&*{\ci}&*{\ci}&*{\ci}\ar@{-}[u]
     \\%%%%%%%%%%%%%%
*{\ci}&*{\ci}&*{\ci}&*{\ci}&*{\ci}&
;&&
*{\ci}&*{\ci}&*{\ci}&*{\ci}\ar@{-}[u]&
;&&
*{\ci}\ar@{-}[u]&*{\ci}\ar@{-}[u]
&*{\ci}\ar@{-}[u]&
;&&
*{\ci}&*{\ci}\ar@{-}[u]
&*{\ci}\ar@{-}[u]&
;&&
*{\ci}&*{\ci}\ar@{-}[u]
&*{\ci}\ar@{-}[u]&
;&&
*{\ci}\ar@{-}[u]
&*{\ci}\ar@{-}[u]\ar@{-}[lu]
&*{\ci}\ar@{-}[u]
}
\end{split}
\] if and only if the Tits form $q_{\mathcal P}$ is weakly nonnegative definite.		
\end{itemize}
The proof of (a) and (b) can be found in \cite[Theorems 10.1, 15.3]{simson} and in \cite[Theorems 3.1.3, 3.1.4, 3.1.6]{haz}. The first equivalences in (a) and (b) were proved by Kleiner \cite{kle} and Nazarova \cite{naz}.

The conditions ``$q_{\mathcal P}$ is weakly positive definite'' and ``$q_{\mathcal P}$ is weakly nonnegative definite'' in (a) and (b) cannot be replaced by ``$q_{\mathcal P}$ is positive definite'' and ``$q_{\mathcal P}$ is nonnegative definite''.
For example, the Tits form $q(x_0,\dots,x_5)$ of
\begin{equation}\label{lche}
\begin{split}
\xymatrix@=15pt{
{p_5}\\
{p_3}\ar@{-}[u]&{p_4}\\
{p_1}\ar@{-}[u]\ar@{-}[ur]&{p_2}
\ar@{-}[u]\ar@{-}[ul]
}
\end{split}
\end{equation}
is weakly positive definite by (a), but it is not nonnegative definite since
$q(-1,2,2,-2,-2,-2)=-1$.  Bondarenko and Stepochkina \cite{bon-st} (see also \cite{bon-st1}) gave a list of posets  with
positive definite Tits form; it consists of four infinite series and 108 posets defined up to duality; this list was constructed in an alternative way in \cite{gas-sim}.

In contrast to this, we prove in the next theorem that ``positive definite'' and ``nonnegative definite'' can be replaced in Theorem \ref{ttt1} by ``weakly positive definite'' and ``weakly nonnegative definite''.

Denote by $\mathcal P\coprod \mathcal P$ the disjoint union of $\mathcal P$ with itself. We identify $\mathcal P\coprod \mathcal P$ with the poset $\mathcal P\cup \mathcal P'=\{p_1,\dots,p_t,p'_1,\dots,p'_t\}$ in with the elements of $\mathcal P$ are not comparable with the elements of $\mathcal P'$ and the order on $\mathcal P'$ is the same as on $\mathcal P$: $p'_i\prec p'_j$ if and only if $p_i\prec p_j$.

\begin{theorem}\label{th4}
{\rm(i)} The form \eqref{khtr} of $\mathcal P$ can be expressed via the Tits form of $\mathcal P \coprod \mathcal P$ as follows:
\begin{equation*}\label{kut}
u_{\mathcal P}(x_0,x_1,\dots,x_t)=q_{\mathcal P \coprod \mathcal P}(x_0,x_1,\dots,x_t,x_1,\dots,x_t).
\end{equation*}

{\rm(ii)} The form $u_{\mathcal P}$ is positive definite if and only if it is weakly positive definite.
The form $u_{\mathcal P}$ is nonnegative definite if and only if it is weakly nonnegative definite.
\end{theorem}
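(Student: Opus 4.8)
Part (i) I would settle by a direct calculation. By \eqref{kde1}, the Tits form of $\mathcal P\coprod\mathcal P=\{p_1,\dots,p_t,p_1',\dots,p_t'\}$ evaluated at $(x_0,x_1,\dots,x_t,y_1,\dots,y_t)$ equals
\[
x_0^2+\sum_{i=1}^t x_i^2+\sum_{i=1}^t y_i^2+\sum_{p_i\prec p_j}x_ix_j+\sum_{p_i'\prec p_j'}y_iy_j-x_0\Big(\sum_i x_i+\sum_i y_i\Big),
\]
since no $p_i$ is comparable with any $p_j'$. As $p_i'\prec p_j'$ iff $p_i\prec p_j$, the substitution $y_i=x_i$ doubles every summand except $x_0^2$ and reproduces \eqref{khtr}, which is exactly (i).

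For (ii), the plan is to first bring $u_{\mathcal P}$ to a convenient normal form by completing the square in $x_0$. Writing $\sum_i x_i$ for $x_1+\dots+x_t$ and using $(\sum_i x_i)^2=\sum_i x_i^2+2\sum_{i<j}x_ix_j$, a short manipulation of \eqref{khtr} yields
\[
u_{\mathcal P}(x_0,x_1,\dots,x_t)=\Big(x_0-\sum_{i=1}^t x_i\Big)^2+\sum_{i=1}^t x_i^2-2\!\!\sum_{\substack{i<j\\ p_i,p_j\text{ incomparable}}}\!\! x_ix_j,
\]
because for $i<j$ the pair survives with coefficient $-2$ precisely when $p_i\not\prec p_j$, i.e. when $p_i,p_j$ are incomparable ($p_j\prec p_i$ being impossible for $i<j$ under our enumeration). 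The point of this form is that the only surviving off-diagonal terms, the incomparability terms, carry the favourable sign, whereas the troublesome comparability terms $+2x_ix_j$ of \eqref{khtr} have been absorbed into the square.

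The implications ``positive $\Rightarrow$ weakly positive'' and ``nonnegative $\Rightarrow$ weakly nonnegative'' are immediate, since the weak conditions test fewer vectors. For the converses, given a nonzero $z=(x_0,x_1,\dots,x_t)\in\mathbb Z^{t+1}$ I would pass to $z^\ast:=\big(\sum_i|x_i|,\,|x_1|,\dots,|x_t|\big)$, which lies in the nonnegative orthant. From the displayed form,
\[
u_{\mathcal P}(z)-u_{\mathcal P}(z^\ast)=\Big(x_0-\sum_i x_i\Big)^2+2\!\!\sum_{\substack{i<j\\ p_i,p_j\text{ incomparable}}}\!\!\big(|x_i||x_j|-x_ix_j\big)\ge 0,
\]
since the first coordinate of $z^\ast$ was chosen to annihilate the square term and $|x_i||x_j|\ge x_ix_j$. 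Thus $u_{\mathcal P}(z)\ge u_{\mathcal P}(z^\ast)$. If $(x_1,\dots,x_t)\ne 0$ then $z^\ast\ne 0$, so weak (non)negativity gives $u_{\mathcal P}(z^\ast)>0$ (resp. $\ge 0$) and hence the same bound for $u_{\mathcal P}(z)$; if $(x_1,\dots,x_t)=0$ then $u_{\mathcal P}(z)=x_0^2>0$ directly. This yields both converses.

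The only real subtlety is the choice of normal form. Applying the substitution $x_i\mapsto|x_i|$ to \eqref{khtr} itself fails, because the comparability terms have coefficient $+2$ and can increase under absolute values; the step that rescues the argument is completing the square in $x_0$ to eliminate those terms, together with the choice $x_0\mapsto\sum_i|x_i|$ (rather than $|x_0|$) that kills the resulting square. After this, the sign-change argument is routine, and the degenerate case $(x_1,\dots,x_t)=0$ is disposed of separately.
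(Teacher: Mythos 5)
Your proof is correct, and part (ii) takes a genuinely different route from the paper. Part (i) coincides with the paper's argument: both expand $q_{\mathcal P\coprod\mathcal P}$ and observe that setting $x_i'=x_i$ doubles every term except $x_0^2$. For part (ii), however, the paper gives a one-line proof that leans on its classification machinery: Section~\ref{sskw3} shows that $u_{\mathcal P}$ is positive (resp.\ nonnegative) definite iff $\mathcal P$ is a chain (resp.\ semichain), via explicit sums of squares, and certifies the failure in the remaining cases by vectors that already lie in the nonnegative orthant ($x_0=2$, $x_i=x_j=1$, resp.\ $x_0=3$, $x_i=x_j=x_k=1$); hence the same case analysis yields the weak versions, and (ii) follows by chasing the equivalences of Theorem~\ref{ttt1}. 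Your argument bypasses the classification entirely: completing the square gives
\[
u_{\mathcal P}(x_0,x_1,\dots,x_t)=\Bigl(x_0-\sum_{i=1}^t x_i\Bigr)^2+\sum_{i=1}^t x_i^2-2\!\!\sum_{\substack{i<j\\ p_i,\,p_j\ \text{incomparable}}}\!\! x_ix_j,
\]
which is correct (and does use the paper's enumeration convention that $p_i\prec p_j$ implies $i<j$, so that for $i<j$ a pair is either comparable or contributes $-2x_ix_j$), and then the comparison $u_{\mathcal P}(z)\ge u_{\mathcal P}(z^\ast)$ with $z^\ast=\bigl(\sum_i|x_i|,|x_1|,\dots,|x_t|\bigr)$ reduces definiteness on all of $\mathbb Z^{t+1}$ to definiteness on the nonnegative orthant, for an arbitrary finite poset. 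What your route buys: it is self-contained (no dependence on Theorem~\ref{ttt1} or its proof), it establishes the sharper quantitative fact that the minimum of $u_{\mathcal P}$ over nonzero integer vectors is attained at a nonnegative vector, and it isolates exactly why the unitary form behaves better than the ordinary Tits form $q_{\mathcal P}$, for which the analogous statement fails (cf.\ the poset \eqref{lche}): after completing the square the troublesome comparability terms vanish and only incomparability terms, with the favourable sign, survive. What the paper's route buys is brevity, since the case analysis of Section~\ref{sskw3} is needed anyway to prove Theorem~\ref{ttt1}.
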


\begin{proof}
(i) This statement is obvious since the  Tits form \eqref{kde1} of $\mathcal P\coprod \mathcal P$ is
\begin{multline*}
	q_{\mathcal P \coprod\mathcal P}(x_0,x_1,\dots,x_t,x'_1,\dots,x'_t)\\=
	x_0^2+\sum_{i=1}^t (x_i^2 +  x_i^{\prime 2}) + \sum_{p_i\prec p_j}(x_ix_j+x'_ix'_j)
	- x_0\sum_{i=1}^t (x_i +x'_i).
\end{multline*}

(ii) This statement holds since all statements in Section \ref{sskw3} remains true if ``positive definite'' and ``nonnegative definite'' are replaced by ``weakly positive definite'' and ``weakly nonnegative definite''.
\end{proof}

\section{How the form $u_{\cal P}$ was constructed}\label{appen}

The form $u_{\cal P}$ defined in \eqref{khtr} is a unitary analog of the Tits forms for posets and quivers (see \eqref{kde1} and \cite[Section 2.4]{haz}). If $z$ is the dimension of any representation of a poset or quiver, then the Tits form is equal to the number of entries in the transforming matrices minus the number of entries in the matrices of the representation. The form $u_{\cal P}$ was constructed analogously:
{\it
let $A=[A_1|\dots|A_t]$ be a matrix representation  of $\cal P$ of size $m\times (n_1,\dots,n_t)$, let all entries of $A$ be independent parameters, and let $A$ be reduced by transformations $A\mapsto RAS$ of unitary $\cal P$-equivalence defined in \eqref{dru}. Then
\begin{equation}\label{heo}
\begin{split}
u_{\mathcal P}(m,n_1,\dots,n_t)=
\#[\text{\rm real parameters in $R$ and $S$}]\\-
\#[\text{\rm real parameters in $A$}]
\end{split}
\end{equation}}
(each complex parameter is counted as two real parameters).

Indeed,
by \eqref{khtr} $u_{\mathcal P}(m,n_1,\dots,n_t)$ is equal to
\begin{equation*}\label{khtrw}
m^2+2(n_1^2
+\dots+n_t^2) + 2\sum_{p_i\prec p_j} n_in_j - 2m(n_1+\dots+n_t)
\end{equation*}
in which
\begin{itemize}
\item $m^2$ is the number of real parameters of a general unitary $m\times m$ matrix $R$ (which gives unitary transformations of rows of $A$) since it is equal to the number of real parameters of a general Hermitian $m\times m$ matrix. They are equal due to Cayley's parametrization of a unitary matrix $U$ that does not have $-1$ as an eigenvalue:
\[U=(iI+H)(iI-H)^{-1}\] in which $H$ is a Hermitian matrix defined by
\[iH=(U+I)^{-1}(U-I).\] In particular, if $m=1$, then $U=[c]$ is given by one real parameter since  $c=e^{i\varphi}$, $0\le\varphi <2\pi$.

\item
$2(n_1^2+\dots+n_t^2)$ is the number of real parameters in the diagonal blocks $S_{11},S_{22},\dots,S_{tt}$ of $S$ (see \eqref{dru}); they give transformations of columns within blocks of $A$.
  \item
$2\sum_{p_i\prec p_j} n_in_j$ is the number of real parameters in nonzero off-diagonal blocks of $S$; they give additions of columns of one block to columns of another block.

  \item
$2m(n_1+\dots+n_t)$ is the number of real parameters in $A$,
\end{itemize}
which proves \eqref{heo}.

In reality, one real parameter in the reducing matrices $R$ and $S$ does not change $A$ since if  $c$ is a complex number with $|c|=1$, then $RAS=(cR)A(c^{-1}S)$. Therefore, the $2m(n_1+\dots+n_t)$ real parameters of $A$ are reduced by the $m^2+2(n_1^2
+\dots+n_t^2) + 2\sum_{p_i\prec p_j} n_in_j-1$ real parameters of $R$ and $S$, and so the number of real parameters of $A$ remaining after reduction is at least $1-u_{\mathcal P}(m,n_1,\dots,n_t)$. This leads to the hypothesis:
\begin{quote}\it
the number of real parameters of the set of block matrices of size $m\times (n_1,\dots,n_t)$ is at least $1-u_{\mathcal P}(m,n_1,\dots,n_t)$.
\end{quote}
Roughly speaking, this means that the classes of unitarily $\cal P$-equivalent block matrices of size $m\times (n_1,\dots,n_t)$ form a family depending on at least $1-u_{\mathcal P}(m,n_1,\dots,n_t)$ continuous real parameters.

A stronger statement for  representations of any quiver $Q$  over an algebraically closed field was proved by Kac \cite{kac}: if the set of indecomposable representations of dimension ${z}$ is nonempty, then
the number of its parameters is $1-q_Q({z})$, in which $q_Q$ is the Tits form of $Q$.
For unitary representations of a quiver (each vertex is assigned by a unitary space and each arrows is assigned by a linear mapping), the number of parameters of the set of indecomposable representations of a fixed dimension  was calculated in \cite[Section 3.3]{Serg_un}.

\section*{Acknowledgement}

V. Futorny is supported in part by the CNPq grant (301743/2007-0) and by the Fapesp grant (2010/50347-9). This work was done during the visit of K. Yusenko to the
University of S\~ao Paulo
as a postdoctoral fellow. He is grateful to the University of S\~ao Paulo for hospitality and to the Fapesp for financial support (2010/15781-0).


\begin{thebibliography}{10}
%\end{thebibliography}

\bibitem{bel-ser_compl}
G.R. Belitskii, V.V. Sergeichuk, Complexity of matrix problems,
{Linear Algebra Appl.} 361 (2003) 203--222.

\bibitem{bon} V.M. Bondarenko, {Representations of bundles of semi-chains and their applications}, St. Petersburg Math. J. \textbf{3} (1992)  973--996.

\bibitem{bon-st}
 V.M. Bondarenko, M.V. Stepochkina, (Min, max)-equivalence of partially ordered sets and quadratic Tits form (in Russian), Zb. Pr. Inst. Mat. NAN Ukr. 2 (No. 3) (2005) 18--58 (Zbl. 1174.16310).

\bibitem{bon-st1}
V.M. Bondarenko, M.V. Stepochkina,  Description of partially ordered sets that are critical with respect to the nonnegativity of the quadratic Tits form, Ukrainian Math. J. 61 (2009) 734--746.

\bibitem{bren}
 S. Brenner, On four subspaces of a vector space, J. Algebra 29 (1974) 587--599.

\bibitem{dixm}
J. Dixmier, Position relative de deux vari\'{e}t\'{e}s lin\'{e}aires ferm\'{e}es dans un espace de Hilbert, Revue Sci. 86 (1948) 387--399.

\bibitem{gab_roi}
P. Gabriel, A.V. Roiter, {Representations of finite-dimensional
algebras,} Encyclopaedia of Math. Sci., vol. 73 (Algebra VIII), Springer-Verlag, 1992.

\bibitem{gal}
A. Gal\'{a}ntai, Subspaces, angles and pairs of orthogonal projections,
Linear Multilinear Algebra 56 (3) (2008) 227--260.

\bibitem{gas-sim}
M G\c{a}siorek, D. Simson,
One-peak posets with positive quadratic Tits form, their mesh translation quivers of roots, and programming in Maple and Python, Linear Algebra Appl. 436 (2012) 2240--2272.

\bibitem{gel-pon_comm}
I.M. Gelfand, V.A. Ponomarev,
Remarks on the classification of a pair of commuting linear transformations in
a finite dimensional vector space, {Functional Anal. Appl.} (1969) (3)
325--326.

\bibitem{gel_pon_chetverki}
I.M. Gelfand, V.A. Ponomarev, Problems of linear algebra and classification of quadruples of subspaces in a finite-dimensional vector space, Coll. Math. Soc. Janos Bolyai 5, Hilbert Space Operators, Tihany, Hungary, 1970, pp. 167--237.

\bibitem{hal}
P.R.~Halmos, Two subspaces, Trans. Amer. Math. Soc. 144 (1969) 381--389.

\bibitem{haz}
M. Hazewinkel, N. Gubareni, V.V. Kirichenko, Algebras, Rings and Modules, vol.~2, Springer, 2007.

\bibitem{hor}
R.A. Horn, C.R. Johnson, Matrix
Analysis, Cambridge U.P., Cambridge, 1985.

\bibitem{kac}
V.G. Kac, Infinite root systems,  representations of graphs and  invariant theory, II, J. Algebra 78 (1982) 141--162.

\bibitem{kle}
 M.M. Kleiner, Partially ordered sets of finite type, Zap. Nau\v{c}n. Sem. Leningrad. Otdel. Mat. Inst.
Steklov. (LOMI) 28 (1972) 32–-41 (in Russian); translation in J. Soviet Math. 3 (5) (1975) 607--615.

\bibitem{krug_80}
 S.A. Kruglyak, Yu.S. Samo\u{\i}lenko, Unitary equivalence of sets of selfajoint operators, Functional Anal. Appl. 14 (1) (1980) 54--55.

\bibitem{med-zav}
G. Medina, A. Zavadskij, The four subspace problem: An elementary solution, Linear Algebra Appl. 392 (2004) 11--23.

\bibitem{naz}
L.A. Nazarova, Partially ordered sets
of infinite type, Math. USSR-Izv. 9 (1975) 911--938.

\bibitem{NazarovaRoiter}
L.A.~Nazarova, A.V.~Roiter,
Representations of partially ordered sets, Zap. Nau\v{c}n. Sem. Leningrad. Otdel. Mat. Inst.
Steklov. (LOMI) 28 (1972) 5--31 (in Russian);
translation in J. Soviet Math. 3 (5) (1975) 585--606.

\bibitem{ser_pairs}
V.V. Sergeichuk,
Classification of
pairs of subspaces in
scalar product spaces,
{Ukrainian Math.
J.} 42 (no. 4) (1990)
487--491.

\bibitem{Serg_un}
V.V.~Sergeichuk,
Unitary and Euclidean representations of a quiver, Linear Algebra Appl. 278 (1998) 37--62.

\bibitem{simson}
D. Simson, Linear representations of partially ordered sets and vector
space categories, Algebra, Logic and Applications, vol. 4, Gordon and Breach Science Publishers, 1992.

\bibitem{ste}
G.W. Stewart, J.-G. Sun,  Matrix Perturbation Theory, Academic Press, Boston, 1990.

\end{thebibliography}
\end{document}